	\def\thesubsection{\thesection\ifnum\c@subsection=0\relax\else\,\fi\Alph{subsection}}
\newcommand{\Thorn}{\textup{\TH}}
\DeclareMathOperator*{\bigast}{{\smash{\scalebox{2.5}{\raisebox{-.9ex}{$*$}}}}}
\newcommand{\sglq}[1]{\text{`}#1\text{'}}
\newcommand{\dblq}[1]{\text{``}#1\text{''}}
\newcommand{\dblqm}[1]{\ulcorner#1\urcorner}
\newtheorem{theorem}{Theorem}[subsection]
\newtheorem{definition}[theorem]{Definition}
\newtheorem{claim}{Claim}[theorem]
\newtheorem{result}[theorem]{Result}
\newtheorem{lemma}[theorem]{Lemma}
\newtheorem{conjecture}[theorem]{Conjecture}
\newtheorem{corollary}[theorem]{Corollary}
\title{Forcing More \(\mathsf{DC}\) Over the Chang Model Using the Thorn Sequence}
\author{James Holland}
\author{Grigor Sargsyan}
\date{2023-03-07}
\begin{document}
	
	\maketitle
	
	\begin{abstract}
			In the context of \(\mathsf{ZF}+\mathsf{DC}\), we force \(\mathsf{DC}_\kappa\) for relations on \(\mathcal{P}(\kappa)\) for arbitrarily large \(\kappa<\aleph_\omega\) over the Chang model \(\mathrm{L}(\mathrm{Ord}^\omega)\) making some assumptions on the thorn sequence defined by \(\Thorn_0=\omega\), \(\Thorn_{\alpha+1}\) as the least ordinal not a surjective image of \(\Thorn_\alpha^\omega\) and \(\Thorn_\gamma=\sup_{\alpha<\gamma}\Thorn_\alpha\) for limit \(\gamma\). These assumptions are motivated from results about \(\Theta\) in the context of determinacy, and could be reasonable ways of thinking about the Chang model. Explicitly, we assume successor points \(\lambda\) on the thorn sequence are strongly regular---meaning regular and functions \(f:\kappa^{<\kappa}\rightarrow \lambda\) are bounded whenever \(\kappa<\lambda\) is on the thorn sequence---and justified---meaning \(\mathcal{P}(\kappa^\omega)\cap \mathrm{L}(\mathrm{Ord}^\omega)\subseteq \mathrm{L}_{\lambda}(\lambda^\omega,X)\) for some \(X\subseteq \lambda\) for any \(\kappa<\lambda\) on the thorn sequence.  This allow us to use Cohen forcing and establish more dependent choice while preserving the thorn sequence and calculating it: \(\Thorn_i=\aleph_{i+1}\) for \(0<i<\omega\) after \(i\) steps in the iteration.
	\end{abstract}
	
	\section{Introduction}
	
	The Chang model has had some recent research studying its connections with \(\mathsf{DC}\)~\cite{LarsonSargsyan} and \(\mathsf{AD}\)~\cite{TakehikoSargsyan}. But it is not nearly as well explored as models like \(\mathrm{L}(\mathbb{R})=\mathrm{L}(\omega^\omega)\) and its forcing extensions.  In the context of determinacy, there has been progress in how to think about \(\mathrm{L}(\omega^\omega)\) by way of assumptions like the regularity of \(\Theta\), and \(\mathsf{AD}\).  Such assumptions become standard ways of thinking and approaching the model.  Unfortunately, such standard assumptions don't seem to exist for the Chang model \(\mathrm{L}(\mathrm{Ord}^\omega)\) even though its theory cannot be changed by forcing under certain large cardinal assumptions~\cite[Corollary 3.1.7]{LarsonTower}.  Here we present some assumptions and arguments that use them.  In particular, we introduce a sequence called the \emph{thorn sequence} of \(\Thorn_\alpha\)---the letter \sglq{\TH} is called \emph{thorn}---for \(\alpha\in \mathrm{Ord}\), and force two statements simultaneously: for \(N<\omega\),
	\begin{enumerate}
		\item \(\Thorn_n=\aleph_n^+\) for \(n\le N\);
		\item \(\mathsf{DC}_{\kappa}\) for relations on \(\mathcal{P}(\kappa)\) for \(\kappa<\aleph_N\).
	\end{enumerate}
	The two assumptions used concern regularity and the calculation of powersets, similar to \(\Theta=\Thorn_1\) in the context of determinacy.
	
	The result runs contrary to anti-choice results related to the Chang model.  For example, if there are uncountably many measurable cardinals, then choice fails in \(\mathrm{L}(\mathrm{Ord}^\omega)\) by \cite{Kunenmeas} and this cannot be changed by forcing when there is a proper class of Woodin cardinals~\cite{LarsonTower}.  So while it is unlikely that full \(\mathsf{AC}\) can be forced over \(\mathrm{L}(\mathrm{Ord}^\omega)\) in the presence of many large cardinals, partial results are still possible.
	
	The main theorem is the following.
	
	\begin{theorem}\label{mainthmfinitethorn}
		Work in \(\mathrm{V}=\mathrm{L}(\mathrm{Ord}^\omega)\vDash\mathsf{ZF}+\mathsf{DC}\), and let \(n<\omega\). Suppose \(\Thorn_i\) is justified and strongly regular for each \(i\in \omega\). Let \(\mathbb{P}_n=\bigast_{i<n}\dot{\mathbb{Q}}_i\) be the iteration where \(\mathbb{Q}_i=\mathrm{Add}(\aleph_i^+,1)\), and let \(G_n\) be \(\mathbb{P}_n\)-generic over \(\mathrm{V}\). Then in \(\mathrm{V}[G_n]\),
		\begin{enumerate}[label=\roman*.]
			\item \(\mathsf{DC}_{\kappa}\) holds for relations on \(\mathcal{P}(\kappa)\) for all \(\kappa\le\aleph_n\).
			\item \(\Thorn_i^{\mathrm{V}}=\Thorn_i^{\mathrm{V}[G_n]}\) are still strongly regular for all \(i\in \omega\).
			\item \(\Thorn_0=\omega\), and \(\Thorn_{i}=\aleph_i^+\) for all \(0<i\le n\).
			\item \(\mathsf{GCH}\) holds below \(\aleph_n\): \(\kappa^+=2^\kappa\) for \(\kappa<\aleph_n\).
		\end{enumerate}
	\end{theorem}
		
	Let us first define the sequence of thorns.  Throughout this paper, we work merely in the context of \(\mathsf{ZF}+\mathsf{DC}\).
	
	\begin{definition}\label{thorndef}
		For sets \(A,B,X\),
		\begin{itemize}
			\item Write \(A\twoheadrightarrow B\) iff there is a surjection \(f:A\rightarrow B\).
			\item Define \(\aleph^*(X)\) to be the least ordinal such that \(X\not\twoheadrightarrow \aleph^*(X)\), i.e.\ 
			\[\aleph^*(X)=\sup\{\alpha+1\in \mathrm{Ord}:X\twoheadrightarrow \alpha\}\text{.}\]
			\item The \emph{thorn sequence} is the sequence of ordinals defined by
			\[\Thorn_0=\omega\text{,}\qquad \Thorn_{\alpha+1}=\aleph^*(\Thorn_\alpha^\omega)\text{,}\qquad \Thorn_\gamma=\bigcup\nolimits_{\alpha<\gamma}\Thorn_\alpha\]
			for \(\gamma\) a limit.
		\end{itemize}
	\end{definition}
	Hence \(\Thorn_1=\Theta\).  The primary assumptions we will be making are the following.
	\begin{definition}\label{thornjustifieddef}
		Let \(X\) be a set and \(\kappa\in\mathrm{Ord}\).
		\begin{itemize}
			\item We write \(\mathop{\mathrm{cof}}(\kappa)>X\) iff there is no function \(f:X\rightarrow \kappa\) such that \(\mathop{\mathrm{im}}(f)\) is unbounded in \(\kappa\).
			\item We say \(\kappa\) is \emph{strongly regular}\footnotemark{} iff \(\kappa\) is regular and \(\mathop{\mathrm{cof}}(\kappa)>\Thorn^{<\Thorn}\) whenever \(\Thorn<\kappa\) is on the thorn sequence.
			\item For \(\kappa\) on the thorn sequence, we say \(\kappa\) is \emph{justified} iff for any \(\Thorn<\kappa\) on the thorn sequence, \(\mathcal{P}(\Thorn^\omega)\subseteq \mathrm{L}_{\kappa}(\kappa^\omega,X)\) for some \(X\subseteq \kappa\).\footnotemark{}
		\end{itemize}
	\end{definition}
	\footnotetext[1]{This is a different notion of \dblq{strong regularity} from \cite{LarsonSargsyan}.}
	\footnotetext[2]{Here we organize the levels such that \(\mathrm{L}_{\kappa}(\kappa^\omega,X)\) has ordinal height \(\kappa\).}
	The motivation for these comes from determinacy where \(\Theta\) being regular implies it is strongly regular as above---just by virtue of \(\omega^{<\omega}=\omega\) and \(\omega<\mathop{\mathrm{cof}}(\Theta)\) in \(\mathsf{AD}\)---and obviously \(\Theta\) is justified in the context of \(\mathrm{L}(\mathbb{R})\).
	\begin{conjecture}\label{conjecturesuccessor}
		Assume a supercompact cardinal exists and \(\mathrm{V}\not=\mathrm{L}(X^\omega)\) for any set \(X\). Then for \(\alpha\ge 1\) and \(\mathop{\mathrm{cof}}(\alpha)\neq\omega\), \(\Thorn_{\alpha+1}=\Thorn_\alpha^+\) is regular.
	\end{conjecture}
	In \(\mathrm{L}(\mathbb{R})\) the conclusion is true, but it's less clear in a Chang model where \(\mathrm{L}(\mathrm{Ord}^\omega)\neq\mathrm{L}(\alpha^\omega)\) for all \(\alpha\in\mathrm{Ord}\).  
	
	There are a variety of questions one can ask about the thorn sequence and variations of the Chang model like the following: under reasonable assumptions,
	\begin{enumerate}
		\item Does \(\mathrm{L}(\Thorn_2^\omega)\vDash\mathrm{V}=\mathrm{L}(\Thorn_1^\omega)\) always?
		\item Does \(\mathrm{L}(\Thorn_2^\omega)\vDash\Thorn_2=\Thorn_1^+\) always?  Such a counter-example might give a counter-example to (1).
		\item Does \(\mathsf{DC}\) hold in \(\mathrm{L}(\bigcup_{n<\omega}\Thorn_n^\omega)\)?
	\end{enumerate}
	We suspect that large cardinals imply that \(\kappa^+\) is regular for \(\kappa\ge\Thorn_1\). Of course, assuming \(\Thorn_1^+\) is strongly regular, we get a positive answer to (2).  Also, for now we will focus on the question of getting higher amounts of \(\mathsf{DC}\) via forcing, which will preserve members of the thorn sequence, but change the calculations of the \(\aleph_n\)s for \(n<\omega\).
	
	Firstly, in our framework of assuming \(\mathsf{DC}\), we get \(\mathsf{DC}\) in the Chang model and its varients: \(\mathrm{L}(\kappa^\omega)\) for any \(\kappa\in \mathrm{Ord}\) satisfies \(\mathsf{DC}\) in a nearly identical way as in Proposition 11.13 of \cite{Kanamori}.
	\begin{result}\label{DCinLω}
		Assume \(\mathsf{ZF}+\mathsf{DC}\).  Let \(\kappa\in \mathrm{Ord}\).  Then \(\mathrm{L}(\kappa^\omega)\vDash\mathsf{DC}\).
	\end{result}
	\begin{proof}
		Let \(R\) be a relation such that any finite length \(R\)-chain can be extended.   We aim to show that there is an infinite branch for \(R\) in \(\mathrm{L}(\kappa^\omega)\).  We know there is one in \(\mathrm{V}\), and so by the absoluteness of well-foundedness, \(R^{-1}\) is illfounded.  But unfortunately, illfoundedness doesn't give us an infinite branch generally unless we already have \(\mathsf{DC}\).  So instead we need to work with ordinals to be able to choose one.
		
		In \(\mathrm{L}(\kappa^\omega)\), there is a canonical surjection \(F:\mathrm{Ord}\times\kappa^\omega\rightarrow \mathrm{L}(\kappa^\omega)\) just given by the construction of the levels of \(\mathrm{L}(\kappa^\omega)\).  Let \(X=\langle x_n:n<\omega\rangle\) be an infinite \(R\)-branch in \(\mathrm{V}\), and let \(\chi\in \mathrm{Ord}\) be such that \(\mathop{\mathrm{im}}(X)\subseteq F"(\chi\times\kappa^\omega)\).  Now again by \(\mathsf{DC}\), for \(n<\omega\), choose \(y_n\in \kappa^\omega\) such that \(\exists\xi_n<\chi\ F(\xi_n,y_n)=x_n\).  The point now is to canonically choose these \(\xi_n\)s without knowing what \(x_n\) is.  We do, however, have that \(\langle y_n:n<\omega\rangle\in \kappa^{\omega\times\omega}\approx\kappa^\omega\) is in \(\mathrm{L}(\kappa^\omega)\).
		
		Now consider a relation \(R_F\in \mathrm{L}(\kappa^\omega)\) on \(\chi\times\omega\) defined by
		\[\langle \xi_0,n_0\rangle  \mathrel{R_F} \langle \xi_1,n_1\rangle \quad\text{iff}\quad n_0=n_1+1\wedge \langle F(\xi_0,y_{n_1}),F(\xi_1,y_{n_1+1})\rangle \in R\]
		It follows that this relation is ill-founded in \(\mathrm{V}\) (by \(\mathsf{DC}\) to choose the relevant ordinals) and hence in \(\mathrm{L}(\kappa^\omega)\) by the absoluteness of ill-foundedness over \(\mathsf{ZF}\).  So let \(S\subseteq \chi \times \omega\) in \(\mathrm{L}(\kappa^\omega)\) be non-empty with no \(R_F\)-minimal element. Let \(\langle \xi_0,n_0\rangle \in S\) be arbitrary, and then we can define \(\langle \xi_m,n_m:m<\omega \rangle\) by taking \(\xi_{m+1}\) to be the least ordinal such that \(\langle \xi_{m+1},n_0+m+1\rangle \mathrel{R_F}\langle \xi_{m},n_0+m\rangle\) and \(\langle \xi_{m+1},n_0+m+1\rangle \in S\).  The resulting sequence gives \(\langle F(\xi_m,y_{n_m}):n<\omega \rangle \in \mathrm{L}(\kappa^\omega)\) as an infinite \(R\)-chain, as desired.\qedhere
	\end{proof}

	This of course generalizes to the Chang model \(\mathrm{L}(\mathrm{Ord}^\omega)\).  The closure under \(\omega\)-length sequences of \(\kappa\) is crucial to this proof, and is why one might suspect \(\mathsf{DC}\) could fail in \(\mathrm{L}(\bigcup_{n<\omega}\Thorn^\omega_n)\), where we only have access to \emph{bounded} sequences in \(\Thorn_\omega^\omega\).
	
	The goal from here on is to establish more and more \(\mathsf{DC}\) in forcing extensions of \(\mathrm{L}(\mathrm{Ord}^\omega)\).  The first step of this is establishing \(\mathsf{DC}_{\omega_1}\) after forcing with \(\mathrm{Add}(\omega_1,1)\) under the assumptions of \(\Thorn_1\) being strongly regular and justified.
	
	\section{The First Steps}

	The main strategy here is as follows, working over \(\mathrm{V}=\mathrm{L}(\mathrm{Ord}^\omega)\):
	\begin{itemize}
		\item Force with \(\mathrm{Add}(\omega_1,1)\).  Assuming \(\Thorn_1\) is strongly regular, we get \(\Thorn_1=\aleph_2\) and \(\mathsf{DC}_{\omega_1}\) for relations on \(\omega_1^\omega\approx \omega_1\).
		\item Assuming \(\Thorn_1\) is also justified, we get \(\mathsf{DC}_{\omega_1}\) for relations on \(\mathcal{P}(\omega_1)\).
		\item Inductively, we then use \(\mathsf{DC}_\kappa\) for relations on \(\mathcal{P}(\kappa)\) to force with \(\mathrm{Add}(\kappa^+,1)\) and get \(\aleph^*(\kappa^\omega)=\aleph^*(\kappa)=\kappa^+\) assuming strong regularity.
		\item \(\kappa^{++}\) being justified tells us \(\mathsf{DC}_{\kappa^+}\) holds for relations on \(\mathcal{P}(\kappa^+)\) so we can continue the induction.
	\end{itemize}
	A fair amount of this is combinatorial, which is why strong regularity is phrased the way it is.  Being justified also helps in finding certain surjections: if \(\mathcal{P}(\kappa)\subseteq \mathrm{L}_\lambda (\lambda^\omega,X)\) for \(X\subseteq \lambda \ge\omega\), then canonically \(\lambda^\omega\approx \lambda \times \lambda^\omega\times X\twoheadrightarrow \mathcal{P}(\kappa)\), meaning there is a canonical surjection witnessing this.  Additionally, we require larger and larger amounts of \(\mathsf{DC}\) to continue this sort of argument because we require a certain amount of distributivity from the Cohen forcings.  Although \(\mathrm{Add}(\kappa^+,1)\) is \(\le\kappa\)-closed, \emph{a priori} without choice, we don't know this means \(\mathrm{Add}(\kappa^+,1)\) is \(\le\kappa\)-distributive.
	
	Note that we don't need full \(\mathsf{DC}_\kappa\), just on \(\mathrm{Add}(\kappa^+,1)\).  The following definition makes this precise.
	\begin{definition}\label{DCvariantdef}
		Let \(X\) be a set, \(\kappa \in \mathrm{Ord}\).  \(\mathsf{DC}_\kappa\) for relations on \(X\) is the statement that for any relation \(R\subseteq X^2\) such that \(<\kappa\)-length \(R\)-chains have upper bounds in \(R\), there is a \(\kappa\)-length \(R\)-chain.
	\end{definition}

	It will be useful in what follows to note that we can canonically identify \(\mathrm{Add}(\kappa^+,1)\) with \(\mathcal{P}(\kappa)\).\footnote{To see this, we can code pairs \(\langle \alpha,\beta,\gamma \rangle \in \kappa^2\times 2\) as a single ordinal \(\mathop{\mathrm{code}}(\alpha,\beta,\gamma)\in \kappa\) via a bijection \(\mathop{\mathrm{code}}:\kappa^2\times 2\rightarrow \kappa\).  We may identify well-orders of \(\kappa\), i.e.\ ordinals \(<\kappa^+\), by certain subsets of \(\kappa^2\).  In particular, for any \(X\subseteq \kappa\), consider \(\mathop{\mathrm{code}}^{-1}"X=\{\langle \langle \alpha,\beta\rangle ,\gamma \rangle \in \kappa^2\times 2:\mathop{\mathrm{code}}(\alpha,\beta,\gamma)\in X\}\).  If \(D=\mathop{\mathrm{dom}}(\mathop{\mathrm{code}}^{-1}"X)\) codes a well-order of length \(\alpha\) via ordertype function \(\pi :D\rightarrow \alpha\)---which is unique---and if \(\mathop{\mathrm{code}}^{-1}"X\) is a function, then define \(f(X)\) as the function \(\{\langle \pi (x),\gamma \rangle :\langle x,\gamma \rangle \in \mathop{\mathrm{code}}^{-1}"X\}\).  We can then consider \(X\leqslant Y\) iff \(f(X)\supseteq f(Y)\).  This \(f:\mathcal{P}(\kappa)\rightarrow \mathrm{Add}(\kappa^+,1)\) is a surjection, and the preorder (rather than poset) \(\langle \mathcal{P}(\kappa),\leqslant\rangle\) is equivalent to \(\mathrm{Add}(\kappa^+,1)\).}
	
	\begin{lemma}\label{adddistrib}
		Suppose \(\mathsf{DC}_{\kappa}\) holds for relations on \(\mathcal{P}(\kappa)\) in \(\mathrm{V}\).  Then \(\mathbb{P}=\mathrm{Add}(\kappa^+,1)\) does not add any functions \(f:\kappa \rightarrow \mathrm{V}\).
	\end{lemma}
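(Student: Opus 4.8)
The plan is to prove the stronger fact that for any \(\mathbb{P}\)-name \(\dot f\) and condition \(p\) forcing \(\dot f\) to be a function from \(\kappa\) into the ground model, the set of conditions deciding \(\dot f\) in its entirety is dense below \(p\); a generic filter then meets this set, so \(\dot f^G\) lies in \(\mathrm{V}\). First I would fix \(r\leqslant p\) and, using the identification of \(\mathbb{P}\) with \(\mathcal{P}(\kappa)\) from the preceding footnote, attach to each condition \(s\leqslant r\) its \emph{decision level} \(\ell(s)\): the least \(\alpha\) such that no value \(a\) has \(s\Vdash\dot f(\check\alpha)=\check a\) (and \(\ell(s)=\kappa\) if \(s\) decides every coordinate of \(\dot f\)). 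If some \(s\leqslant r\) already has \(\ell(s)=\kappa\) we are finished, so I may assume \(\ell(s)<\kappa\) for all \(s\leqslant r\).

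Next I would introduce the relation \(R\) on \(\mathbb{P}_{\leqslant r}\) defined by \(x\mathrel R y\) iff \(y\leqslant x\) and \(\ell(y)>\ell(x)\), and check that its chains of length \(<\kappa\) are bounded, so \(\mathsf{DC}_\kappa\) applies. Given such a chain \(\langle x_\alpha:\alpha<\delta\rangle\), the decisive structural feature of Cohen forcing is that lower bounds are \emph{canonical}: the union \(x^*=\bigcup_{\alpha<\delta}x_\alpha\) of the conditions qua partial functions is itself a condition (its domain has size \(\leqslant\kappa\)) lying below every \(x_\alpha\), and forming it requires no choice. Since \(x^*\leqslant r\) we have \(\ell(x^*)<\kappa\), so by density there is \(y\leqslant x^*\) deciding \(\dot f(\check{\ell(x^*)})\); then \(\ell(y)>\ell(x^*)\geqslant\ell(x_\alpha)\) for all \(\alpha\), making \(y\) an \(R\)-upper bound. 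Transferring \(\mathsf{DC}_\kappa\) for relations on \(\mathcal{P}(\kappa)\) to the subset \(\mathbb{P}_{\leqslant r}\) (a routine point: a relation on a subset extends to one on the whole set for which the bounded-chain hypothesis is preserved, since isolated elements can be declared related to everything), I obtain a \(\kappa\)-length \(R\)-chain \(\langle x_\alpha:\alpha<\kappa\rangle\). As \(\ell\) is strictly increasing along it, \(\ell(x_\alpha)\geqslant\alpha\), so its canonical union \(q=\bigcup_{\alpha<\kappa}x_\alpha\) is a condition below \(r\) deciding \(\dot f(\check\gamma)\) for every \(\gamma<\kappa\). Collecting these (unique) values yields a function \(g\in\mathrm{V}\) with \(q\Vdash\dot f=\check g\), which witnesses density below \(r\).

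The step I expect to be the main obstacle is exactly the one that motivates the hypothesis: running a transfinite construction of a descending sequence of conditions that decide more and more of \(\dot f\), without access to choice. The argument is arranged so that the only genuine choices—picking an extension deciding the next value—are never made by hand but are bundled into the single invocation of \(\mathsf{DC}_\kappa\), whose hypothesis asks only for the \emph{existence} of bounds, while assembling a sequence into a condition is choice-free because lower bounds in \(\mathrm{Add}(\kappa^+,1)\) are literally unions. I would take care to confirm that \(R\) is a genuine definable, set-sized relation on \(\mathcal{P}(\kappa)\), so that the assumed instance of \(\mathsf{DC}_\kappa\) is the relevant one, and that no appeal to regularity of \(\kappa\) slips in: were \(\sup_{\alpha<\delta}\ell(x_\alpha)\) to reach \(\kappa\), then \(x^*\) would already decide all of \(\dot f\), contradicting the standing assumption that \(\ell(s)<\kappa\) for every \(s\leqslant r\).
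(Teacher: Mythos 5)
Your proposal is correct and follows essentially the same route as the paper's proof: order conditions below a fixed \(p^*\) by \dblq{extends and decides strictly more of \(\dot f\)}, verify the \(<\kappa\)-chain extension property via canonical unions at limits and density at successors, invoke \(\mathsf{DC}_\kappa\) for relations on \(\mathcal{P}(\kappa)\), and take the union of the resulting \(\kappa\)-chain to get a condition deciding all of \(\dot f\), concluding by genericity. Your write-up is if anything slightly more careful than the paper's (you make the requirement \(y\leqslant x\) explicit in the relation and isolate the density claim), but there is no substantive difference in method.
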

	\begin{proof}
		Identify \(\mathbb{P}\) with \(2^{<\kappa^+}\) and thus with \(\mathcal{P}(\kappa)\) so that we may use \(\mathsf{DC}_\kappa\) with it.  Let \(G\) be \(\mathrm{Add}(\kappa^+,1)\)-generic over \(\mathrm{V}\). Suppose \(f:\kappa \rightarrow \mathrm{V}\) is in \(\mathrm{V}[G]\).  Let \(\dot f\) be a name for \(f\).  Let \(p\) force that \(\dot f\) is a function with domain \(\kappa\).  Now consider the tree of conditions of \(\mathbb{P}\) below \(p\) that decide more and more of \(\dot f\).  More precisely, for each \(p^*\leqslant p\), let \(f_{p^*}\in \mathrm{V}\) denote the largest initial segment of \(\dot f\) decided by \(p^*\): \(p^*\Vdash\dblqm{\check f_{p^*}\subseteq \dot f}\) and \(\mathop{\mathrm{dom}}(f_{p^*})\in \mathrm{Ord}\).  We then consider \(T\) as equivalence classes of \(p^*\leqslant p\) modulo equality of \(f_{p^*}\).  We may consider \(T\) as a relation on \(\mathcal{P}(\kappa)\).  Then we order \(T\) by \(q\prec r\) iff \(f_q \subsetneq f_r\).  We will find a condition \(p^*\) such that \(p^*\Vdash\dblqm{\check f_{p^*}=\dot f}\).
			
		Suppose \(\vec{x}\) is any chain of length \(<\kappa^+\).  If \(\mathop{\mathrm{lh}}(\vec{x})\) is a limit, then \(x=\bigcup_{\alpha<\mathop{\mathrm{lh}}(\vec{x})}\vec{x}(\alpha)\) is a condition of \(\mathrm{Add}(\kappa^+,1)\) such that \(f_x\supseteq \bigcup_{\alpha<\mathop{\mathrm{lh}}(\vec{x})}f_{\vec{x}(\alpha)}\) is strictly larger than any of the previous initial segments. Now suppose \(\mathop{\mathrm{lh}}(\vec{x})=\alpha+1\) is a successor ordinal.  Since \(f:\kappa \rightarrow \mathrm{V}\), we have \(f(\alpha+1)=y\in \mathrm{V}\) for some \(y\) as forced by some \(p^*\in \mathbb{P}\): \(p^*\Vdash\dblqm{\dot f(\check \alpha+1)=\check y}\).  Hence \(f_{p^*}\supseteq f\cup \{\langle \alpha+1,y\rangle \}\) allows us to extend \(\vec{x}\).
			
		The result is that for any \(p^*\leqslant p\) in \(\mathbb{P}\), by \(\mathsf{DC}_{\kappa}\), we can always find a branch of \(T\) below \(p^*\) that decides all of \(f\).  By density, a single \(p^*\in G\) will decide all of \(f\), and using this \(p^*\), we can define \(f\) in \(\mathrm{V}\).\qedhere
	\end{proof}

	For the following, we don't need that \(\Thorn_1\) is justified; we only use strong regularity.
	
	\begin{theorem}\label{changthorn1}
		Work in \(\mathrm{V}=\mathrm{L}(\mathrm{Ord}^\omega)\vDash\mathsf{ZF}+\mathsf{DC}\).  Suppose \(\Thorn_1\) is strongly regular.  Let \(G\) be \(\mathbb{P}_0=\mathrm{Add}(\Thorn_0^+,1)\)-generic over \(\mathrm{V}\).  Then in \(\mathrm{V}[G]\),
		\begin{enumerate}
			\item \(\Thorn_1^{\mathrm{V}}=\Thorn_1^{\mathrm{V}[G]}\);
			\item \(|\omega^\omega|=\aleph_1\) and \(\Thorn_1=\aleph_1^+=\aleph_2\);
			\item \(\Thorn_1\) is still regular.
			\item In fact, any \(\kappa >\aleph_1\) that is strongly regular in \(\mathrm{V}\) is still strongly regular in \(\mathrm{V}[G]\), e.g.\ \(\Thorn_1\).
			\item Moreover, \(\Thorn_\alpha^{\mathrm{V}}=\Thorn_{\alpha}^{\mathrm{V}[G]}\) for any \(\alpha\).
			\item \(\mathsf{DC}_{\omega_1}\) holds for relations on \(\Thorn_1^\omega\).
		\end{enumerate}
	\end{theorem}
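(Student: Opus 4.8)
The plan is to derive everything from two facts about the forcing \(\mathbb{P}_0=\mathrm{Add}(\thorn_0^+,1)=\mathrm{Add}(\omega^+,1)\) over \(\mathrm{V}\). First, since \(\mathsf{DC}\) is exactly \(\mathsf{DC}_\omega\) and in particular holds for relations on \(\mathcal{P}(\omega)\), Lemma~\ref{adddistrib} with \(\kappa=\omega\) shows \(\mathbb{P}_0\) adds no new function \(\omega\rightarrow\mathrm{V}\), hence no new reals and no new \(\omega\)-sequences of ordinals. Second, the surjection of the footnote gives \(\omega^\omega\twoheadrightarrow\mathbb{P}_0\) in \(\mathrm{V}\), whence \(\thorn^\omega\approx\thorn^\omega\times\thorn^\omega\twoheadrightarrow\thorn^\omega\times\mathbb{P}_0\) for every \(\thorn\ge\omega\). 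With these in hand, (1) and (5) are preservation of \(\Thorn(\thorn_\alpha^\omega)\) by pulling surjections back to \(\mathrm{V}\): given \(f\colon\thorn_\alpha^\omega\twoheadrightarrow\beta\) in \(\mathrm{V}[G]\), the domain is unchanged by the no-new-\(\omega\)-sequences fact, and the map sending \((s,p)\) to \(\xi\) whenever \(p\Vdash\dot f(\check s)=\check\xi\) is definable in \(\mathrm{V}\) and surjective onto \(\beta\) (each \(\xi<\beta\) has a \(\mathrm{V}\)-real preimage forced by some \(p\in G\)). Composing with \(\thorn_\alpha^\omega\twoheadrightarrow\thorn_\alpha^\omega\times\mathbb{P}_0\) gives a \(\mathrm{V}\)-surjection \(\thorn_\alpha^\omega\twoheadrightarrow\beta\), so \(\beta<\Thorn(\thorn_\alpha^\omega)^{\mathrm{V}}\); the reverse inequality is immediate. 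An induction on \(\alpha\), with limits handled as suprema, yields (5), and (1) is the case \(\alpha=1\).

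For (2) the mechanism is the generic itself. Viewing \(G\) as a map \(\omega_1\rightarrow2\) and reading the blocks \(G\!\restriction\![\omega\cdot\alpha,\omega\cdot\alpha+\omega)\) produces an \(\omega_1\)-indexed list of reals; for a fixed ground-model real \(s\) the set of conditions writing \(s\) into some block is dense, so by genericity every ground-model real—hence, since there are no new reals, every real—occurs. This is a surjection \(\omega_1\twoheadrightarrow\omega^\omega\) in \(\mathrm{V}[G]\), whose least-preimage inverse injects \(\omega^\omega\) into \(\omega_1\); as \(\omega^\omega\) is uncountable this gives \(|\omega^\omega|=\omega_1\). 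Then \(\thorn_1=\Thorn(\omega^\omega)=\Thorn(\omega_1)=\omega_1^+=\omega_2\), which by (1) equals \(\thorn_1^{\mathrm{V}}\). Preservation of \(\omega_1\) itself, used throughout, is again the no-new-countable-sequences fact.

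The regularity statements—(3) and the first conjunct of (4)—I would prove by a covering argument, valid precisely because the domain of a cofinal map is an ordinal. If \(h\colon\delta\rightarrow\kappa\) is cofinal in \(\mathrm{V}[G]\) with \(\delta<\kappa\) and \(\kappa\ge\thorn_1\) regular in \(\mathrm{V}\), then for each \(\alpha<\delta\) the value-set \(H(\alpha)=\{\xi:\exists p\ p\Vdash\dot h(\check\alpha)=\check\xi\}\) is a surjective image of \(\mathbb{P}_0\), hence of \(\omega^\omega\). Since \(\omega^\omega\not\twoheadrightarrow\kappa\) and \(\kappa\) is regular in \(\mathrm{V}\), no surjective image of \(\omega^\omega\) is cofinal in \(\kappa\) (an order-type computation turns such an image into a short cofinal sequence, contradicting regularity), so \(\eta_\alpha=\sup H(\alpha)<\kappa\). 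The sequence \(\langle\eta_\alpha:\alpha<\delta\rangle\) lives in \(\mathrm{V}\) and would be cofinal there, contradicting \(\mathrm{V}\)-regularity; thus \(\kappa\) stays regular. For (6) I would reduce to wellorderability: the relevant domain becomes wellorderable of size \(\omega_1\) in \(\mathrm{V}[G]\), using \(|\omega^\omega|=\omega_1\) to code each countable sequence by a bounded ordinal together with a real, whereupon \(\mathsf{DC}_{\omega_1}\) for relations on a wellorderable set is a theorem of \(\mathsf{ZF}\) by a canonical length-\(\omega_1\) recursion that selects wellorder-least upper bounds at each stage.

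The hard part will be the surviving conjunct of strong regularity in (4): that \(\mathop{\mathrm{cof}}(\kappa)>\thorn^{<\thorn}\) is preserved for \(\thorn\ge\thorn_1\). Here the covering argument breaks down, because \(\mathbb{P}_0\) adds new short sequences—most importantly new \(\omega_1\)-sequences into \(\thorn\)—so a cofinal map \(f\colon(\thorn^{<\thorn})^{\mathrm{V}[G]}\rightarrow\kappa\) has a domain strictly larger than its ground-model part and cannot be pulled back through \(\check{\phantom{x}}\). The plan is to pass to nice names: represent each new short sequence by ground-model name-data (antichains labelled by ordinals \(<\thorn\)), exhibit a \(\mathrm{V}\)-surjection onto the enlarged domain from a set assembled out of \((\thorn^{<\thorn})^{\mathrm{V}}\) and a surjective image of \(\omega^\omega\), and then invoke the ground-model inequality \(\mathop{\mathrm{cof}}(\kappa)>\thorn^{<\thorn}\), which is phrased precisely to absorb exactly these extra factors. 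Arranging the antichain bookkeeping without choice, so that this surjection onto the new short sequences genuinely exists in \(\mathrm{V}\), is the delicate combinatorial core on which the continuation of the induction rests.
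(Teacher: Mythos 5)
Your handling of (1), (2), and (5) is essentially the paper's argument: pull a putative surjection \(f\colon\thorn_\alpha^\omega\rightarrow\beta\) back to \(\mathrm{V}\) via \(f'(x,p)=\xi\) iff \(p\Vdash\dblq{\dot f(\check x)=\check\xi}\), absorb the \(\mathbb{P}_0\)-coordinate into \(\thorn_\alpha^\omega\) because \(\mathbb{P}_0\) is a surjective image of \(\omega^\omega\), and read a length-\(\omega_1\) enumeration of \(\omega^\omega\) off the generic. Your covering argument for (3) and for plain regularity in (4) — bounding each value-set \(H(\alpha)\) below \(\kappa\) because a cofinal subset of a regular \(\kappa\ge\thorn_1\) that is a surjective image of \(\omega^\omega\) would give \(\omega^\omega\twoheadrightarrow\kappa\) — is a clean and correct alternative to the paper's pull-back. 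But two parts are not in order. For the second conjunct of (4) you stop at a nice-names sketch and explicitly leave the ``delicate combinatorial core'' unexecuted; the paper disposes of it with the same one-line pull-back as in (3), getting an unbounded \(f'\colon\lambda^{<\lambda}\times\omega_1^\omega\rightarrow\kappa\) in \(\mathrm{V}\) and coding the extra factor into \(\lambda^{<\lambda}\) since WLOG \(\lambda\ge\omega_1\). As written, this part of your proposal is simply incomplete.

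The genuine gap is (6). The domain \(\thorn_1^\omega=\omega_2^\omega\) is \emph{not} well-orderable of size \(\omega_1\): the constant functions inject \(\omega_2\) into it. Worse, your coding of a sequence by ``a bounded ordinal together with a real'' does not even establish well-orderability, because decoding a pair \((\beta,r)\) requires a surjection \(\omega^\omega\twoheadrightarrow\beta\) chosen uniformly for all \(\beta<\omega_2\), and selecting \(\omega_2\)-many such surjections is itself a choice problem you have not addressed. This is exactly where strong regularity of \(\thorn_1\) does its work in the paper, and you never invoke it for (6): one shows that the map sending \(\xi<\thorn_1\) to the least \(\gamma_\xi\) such that every countable \(R\)-chain in \(\xi^\omega\) has an upper bound in \(\gamma_\xi^\omega\) stays below \(\thorn_1\) (otherwise an unbounded map from a surjective image of \(\thorn_0^\omega\) into \(\thorn_1\) appears), and then closes off to a single \(\beta_*<\thorn_1\) of cofinality \(\omega_1\) with \(\gamma_\xi\le\beta_*\) for all \(\xi<\beta_*\). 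The restriction of \(R\) to \(\beta_*^\omega\) is still \(<\omega_1\)-closed, and \(\beta_*^\omega\) is the image of a \emph{single} surjection from \(\omega^\omega\), hence genuinely well-orderable of size \(\omega_1\) in \(\mathrm{V}[G]\); only then does your least-upper-bound recursion go through. Without this closure point the recursion has no reason to remain inside any well-orderable set, since the upper bounds guaranteed by the hypothesis may lie anywhere in \(\thorn_1^\omega\).
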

	\begin{proof}
		Note that there is a canonical class surjection \(F:\mathrm{Ord}^\omega\rightarrow \mathrm{V}\) and \(F_G:\mathrm{Ord}^\omega\rightarrow \mathrm{V}[G]\) by constructibility.  Since \(\mathsf{DC}\) holds by \autoref{DCinLω}, \autoref{adddistrib} implies we don't add new countable sequences.  So \(\omega_1^{\mathrm{V}}=\omega_1^{\mathrm{V}[G]}\), and \((\omega^\omega)^{\mathrm{V}}=(\omega^\omega)^{\mathrm{V}[G]}\), for example.
		\begin{claim}\label{changthorn1cl1}
			In \(\mathrm{V}\), \(\omega_1^\omega\approx \mathrm{Add}(\omega_1,1)\not\twoheadrightarrow\Thorn_1^{\mathrm{V}}\).  In particular, \(\Thorn_1>\omega_1\) in \(\mathrm{V}\).
		\end{claim}
		\begin{proof}
			We have \(\omega^\omega\twoheadrightarrow\omega_1^\omega\) by the following: take \(r\in \omega^\omega\approx \omega^{\omega\times \omega}\approx(\omega^\omega)^\omega\) and identify it with infinitely many reals \(r_n\in \omega^\omega\), \(n<\omega\).  If \(r_n\) codes a well-order of \(\omega\), send \(r_n\) to its order type \(\mathrm{ot}(r_n)\in \omega_1\).  Otherwise, send \(r_n\) to \(\mathrm{ot}(r_n)=r_n(0)\).  It follows that the map \(f\) sending \(r\in \omega^\omega\) to \(\langle \mathrm{ot}(r_n):n<\omega\rangle \in \omega_1^\omega\) is surjective.  In particular, if there is a surjection \(g:\omega_1^\omega\rightarrow \Thorn_1\), then \(g\circ f:\omega^\omega\rightarrow \Thorn_1\) would contradict that \(\omega^\omega\) doesn't surject onto \(\Thorn_1\).\qedhere
		\end{proof}
		Now we may show the results of the theorem.
		\begin{enumerate}\parskip 10pt
			\item Suppose \(f:\omega^\omega\rightarrow \Thorn_1^{\mathrm{V}}\) is in \(\mathrm{V}[G]\).  We want to show \(f\) isn't surjective. There is a name \(\dot f\) for \(f\) forced to have domain \((\omega^\omega)^{\mathrm{V}}\) (below some arbitrary condition).  Consider \(f':\omega^\omega\times \omega_1^\omega\rightarrow \Thorn_1^{\mathrm{V}}\) where \(f'(x,p)\) is the least \(\gamma <\Thorn_1\) such that \(p\Vdash\dblqm{\dot f(\check x)=\check\gamma}\).  Through coding pairs, this gives a map \(f':\omega_1^\omega\rightarrow \Thorn_1^{\mathrm{V}}\), which cannot be surjective by \autoref{changthorn1cl1}, implying \(f\) isn't surjective in \(\mathrm{V}[G]\).
			
			\item It's not hard to see \(\mathrm{Add}(\omega_1,1)\) adds a well-order of \(\omega^\omega=\Thorn_0^\omega\) of length \(\omega_1\) because consider \(g\) as what \(G\) says about \(\omega^\omega\):
			\[g=\{\langle \alpha,r\rangle :\exists p\in G\ (\langle \alpha,r\rangle \in p\wedge r\in \omega^\omega)\}=(\omega_1\times \omega^\omega)\cap \bigcup G\text{.}\]
			This \(g\) is a surjection from an uncountable subset of \(\omega_1\) to \(\omega^\omega\), as witnessed by
			\begin{align*}
			D_{\ge\alpha}&=\{p\in \mathbb{P}:\exists \beta\ge\alpha\ (p(\beta)\in \omega^\omega)\}\\
			E_r&=\{p\in \mathbb{P}:r\in \mathop{\mathrm{im}}(p)\}\text{,}
			\end{align*}
			which are dense for each \(\alpha<\omega_1\) and \(r\in \omega^\omega\).  So now we recursively define two sequences \(\langle \beta_\alpha<\omega_1:\alpha<\omega_1\rangle\) and \(\langle r_\alpha\in \omega^\omega:\alpha<\omega_1\rangle\):
			\begin{itemize}
				\item \(\beta_0=\min(\mathop{\mathrm{dom}}(g))\) and \(r_0=g(\beta_0)\).
				\item For \(\alpha>0\), \(r_\alpha=g(\beta_\alpha)\); and
				\item \(\beta_\alpha\) is the least element \(\beta\in \mathop{\mathrm{dom}}(g)\) such that \(\beta>\beta_\xi\) for each \(\xi <\alpha\) and \(g(\beta)\not\in \{r_\xi :\xi <\alpha\}\).
			\end{itemize}
			It follows that \(\{r_\alpha:\alpha<\omega_1\}=\mathop{\mathrm{im}}(g)=\omega^\omega\) and so we have a bijection from \(\omega_1\) to \(\omega^\omega\). It follows that \(\Thorn_1=\aleph_1^+=\aleph_2\).
			
			\item Suppose \(f:\omega_1\rightarrow \Thorn_1\) has an unbounded image in \(\mathrm{V}[G]\).  We can then consider a name \(\dot f\) for \(f\) and a function \(f':\omega_1\times \omega_1^{\omega}\rightarrow \Thorn_1\) defined by \(f'(\alpha,p)=\beta\) iff \(p\Vdash\dblqm{\dot f(\check\alpha)=\check\beta}\) (and otherwise \(f'(\alpha,p)=0\)).  This would imply \(f':\omega_1^{<\omega_1}\rightarrow \Thorn_1\) is unbounded.  Note that we can find a surjection \(g:\omega^\omega\rightarrow \omega_1^\omega\) so that composing these gives \(f'\circ g:\omega^\omega\rightarrow \Thorn_1\) an unbounded map from \(\omega^\omega=\Thorn_0^\omega\) to \(\Thorn_1\) which contradicts strong regularity in \(\mathrm{V}\).
			
			\item Suppose \(\kappa\) is strongly regular in \(\mathrm{V}\).  Suppose \(f:\lambda^{<\lambda}\rightarrow \kappa\) in \(\mathrm{V}[G]\) is unbounded in \(\kappa\) for some \(\lambda <\kappa\).  We as before can then get an unbounded \(f':\lambda^{<\lambda}\times \omega_1^{\omega}\rightarrow \kappa\).  Since \(\kappa >\omega_1\), without loss of generality, \(\lambda \ge\omega_1\) so by coding \(\omega_1^\omega\) into \(\lambda^{<\lambda}\), \(f'\) is equivalent to an unbounded function from \(\lambda^{<\lambda}\) into \(\kappa\), and hence in \(\mathrm{V}\), we violate strong regularity.
			
			\item This same proof shows that the thorn sequence is preserved: we just showed \(\Thorn_1\) is the same in both \(\mathrm{V}\) and \(\mathrm{V}[G]\).  Inductively, if \(\Thorn_\alpha\) is the same in both, for \(\Thorn_{\alpha+1}\) with \(\alpha\ge 1\), any function \(f:\Thorn_\alpha^\omega\rightarrow \mathrm{Ord}\) in \(\mathrm{V}[G]\) can be expanded to a function \(f:\Thorn_\alpha^\omega\times \omega_1^\omega\rightarrow \mathrm{Ord}\) by way of \(f'(x,p)=\beta\) iff \(p\Vdash\dblqm{\dot f(\check x)=\check\beta}\) where \(\dot f\) is a name for \(f\).  Since \(\alpha\ge 1\), \(\Thorn_\alpha\ge\omega_1\) and we can code to get \(f':\Thorn_\alpha^\omega\rightarrow \mathrm{Ord}\).  If \(f\) surjected onto \(\Thorn_{\alpha+1}^{\mathrm{V}}\), then \(f'\) would surject onto \(\Thorn_{\alpha+1}^{\mathrm{V}}\), contradicting that \(f'\in \mathrm{V}\).  Limit stages being absolute are obvious.  Hence by induction on \(\alpha\), \(\Thorn_\alpha^{\mathrm{V}}=\Thorn_{\alpha}^{\mathrm{V}[G]}\) for all \(\alpha\in \mathrm{Ord}\).
			
			\item Let \(R\) be a relation on \(\Thorn_1^\omega\) of height \(\le\omega_1\) where \(\Thorn_1>\omega_1\) is strongly regular.  Assume \(<\omega_1\)-length chains of \(R\) have upper bounds in \(R\).  We want to show there is an \(\omega_1\)-length branch of \(R\).
			
			Let \(\gamma_\beta\) be the least length such that all countable \(R\)-chains in \(\beta^\omega\) have an upper bound in \(\gamma_\beta^\omega\). Note that if \(\beta<\Thorn_1\) then \(\gamma_\beta<\Thorn_1\).  To see this, suppose \(\beta<\Thorn_1\) but \(\gamma_\beta\ge\Thorn_1\) (so that \(\gamma_\beta=\Thorn_1\)).  For \(x\in \beta^\omega\), let \(g(x)\) be the least \(\gamma <\Thorn_1\) such that \(x\) has a bound in \(\gamma^\omega\).  It follows that \(g\) must be unbounded in \(\Thorn_1\), contradicting its strong regularity.
			
			So let \(\hat\beta<\Thorn_1\) be sufficiently large of cofinality \(\omega_1\) such that \(\gamma_\xi \le\hat\beta\) for each \(\xi <\hat\beta\).  Such a \(\hat\beta\) exists just by taking the supremum of the sequence defined by recursion \(\eta_0=\gamma_0\), \(\eta_{\alpha+1}=\gamma_{\eta_\alpha}<\Thorn_1\), and taking supremums at limits \(\alpha<\omega_1\).  \(\hat\beta=\sup_{\alpha<\omega_1}\eta_\alpha<\Thorn_1\) by regularity, and has the desired closure property.
			
			We know \(\omega^\omega\twoheadrightarrow\hat\beta^\omega\), and \(|\omega^\omega|=\aleph_1\) in \(\mathrm{V}[G]\).  So let \(f:\omega_1\rightarrow \hat\beta^\omega\) be a surjection.  Consider the relation \(R'\subseteq \omega_1^2\) defined by
			\[x_0\mathrel{R'}x_1\quad\text{iff}\quad f(x_0) \mathrel{R} f(x_1)\text{.}\]
			It follows that \(R'\) is still \(<\omega_1\)-closed of height \(\le\omega_1\).  But the domain of \(R'\), as a subset of \(\omega_1\), can be well-ordered.  Hence we can find a \(\omega_1\)-length branch.  To see this, for \(\vec{x}\) a chain already constructed, pick the least \(\alpha\in \omega_1\) with \(\vec{x}^\frown\langle \alpha\rangle\) still an \(R'\)-chain.  By \(<\omega_1\)-closure, we can continue to find such an \(\alpha\) at any \(\vec{x}\) with countable length.  Taking unions at limit stages, the resulting construction continues to an \(\omega_1\)-length chain \(\langle x_\alpha:\alpha<\omega_1\rangle\) so that \(\langle f(x_\alpha):\alpha<\omega\rangle\) is an \(R\)-chain.  As a result, \(\mathsf{DC}_{\omega_1}\) holds on such relations.\qedhere
		\end{enumerate}
	\end{proof}

	Strong regularity comes in to get \(\mathsf{DC}_{\omega_1}\) on more relations, and in particular on \(\mathcal{P}(\Thorn_0^+)=\mathcal{P}(\omega_1)\) in this context.  This allows us to have \autoref{adddistrib} with \(\mathrm{Add}(\omega_2,1)\) which will then give us more \(\mathsf{DC}\).
	
	\begin{result}\label{changthorn1justified}
		Work in \(\mathrm{V}=\mathrm{L}(\mathrm{Ord}^\omega)\vDash\mathsf{ZF}+\mathsf{DC}\).  Suppose \(\Thorn_1\) is strongly regular and justified.  Let \(G\) be \(\mathbb{P}=\mathrm{Add}(\Thorn_0^+,1)\)-generic over \(\mathrm{V}\).  Then in \(\mathrm{V}[G]\), \(\mathsf{DC}_{\omega_1}\) holds for relations on \(\mathcal{P}(\omega_1)\).
	\end{result}
	\begin{proof}
		Because \(\Thorn_1\) is justified, there's a canonical surjection witnessing
		\[\Thorn_1^\omega\approx \Thorn_1^\omega\times X\times \Thorn_1\twoheadrightarrow \mathcal{P}(\omega^\omega)\approx \mathcal{P}(\omega^\omega\times \omega_1^\omega)\approx\mathcal{P}(\omega_1^\omega)\]
		in \(\mathrm{V}\) where \(X\subseteq \Thorn_1\).  In particular, in \(\mathrm{V}[G]\), there is a surjection \(f:\Thorn_1^\omega\times X\rightarrow \mathcal{P}(\omega_1)\). By identifying \(X\) with its order-type \(\mathrm{ot}(X)\le\Thorn_1\), we may regard \(\mathop{\mathrm{dom}}(f)\) as instead just \(\Thorn_1^\omega\).  In \(\mathrm{V}[G]\), if \(R\) is a relation on \(\mathcal{P}(\omega_1)\), we can then consider the pull-back relation defined by
		\[\vec{x}\mathrel{R'}\vec{y}\quad\text{iff}\quad f(\vec{x})\mathrel{R}f(\vec{y})\text{.}\]
		Let \(\langle x_\alpha:\alpha<\tau\rangle\) be an \(R\)-chain where \(\tau<\omega_1\).  By \(\mathsf{DC}\), we can pull this back to an \(R'\)-chain \(\langle y_\alpha:\alpha<\tau\rangle\) where \(f(y_\alpha)=x_\alpha\).  \(R'\) is a relation on \(\Thorn_1^\omega\) so by \autoref{changthorn1} (6), \(\mathsf{DC}_{\omega_1}\) holds for \(R'\): we get an \(\omega_1\)-length \(R'\)-chain \(\langle y_\alpha:\alpha<\omega_1\rangle\).  This gives an \(\omega_1\)-length \(R\)-chain \(\langle x_\alpha:\alpha<\omega_1\rangle\).\qedhere
	\end{proof}

\section{Higher Amounts of \texorpdfstring{\(\mathsf{DC}\)}{DC}}
	
	To go beyond this, we will force with \(\mathrm{Add}(\omega_2,1)\) to get \(\mathsf{DC}_{\omega_2}\) on \(\Thorn_2^\omega\) in a way that doesn't change \(\Thorn_1=\omega_2\) and still preserves any strongly regular cardinals.  Assuming \(\Thorn_2\) is justified, we can expand this to \(\mathsf{DC}_{\omega_2}\) on \(\mathcal{P}(\omega_2)\).  Then we continue on in this way, forcing with \(\mathrm{Add}(\omega_{n+2},1)=\mathrm{Add}(\Thorn_n^+,1)\) to calculate \(\Thorn_{n+1}\) and get \(\mathsf{DC}_{\Thorn_{n+1}}\) on \(\Thorn_{n+1}^\omega\).  Assuming these thorns are justified, this expands to \(\mathsf{DC}_{\Thorn_{n+1}}\) on \(\mathcal{P}(\Thorn_{n+1})\) to give sufficient distributivity for the next forcing.  Before we get into this, however, we should note that small forcings do not change the thorn sequence assuming strong regularity for it.
	
	\begin{lemma}\label{changthornlatersteps}
		\begin{enumerate}[label=\alph*.]
			\item Let \(\kappa >\aleph_0\) be an arbitrary cardinal.
			\item Assume \(\mathsf{DC}_\kappa\) holds for relations on \(\mathcal{P}(\kappa)\).
			\item Let \(\mathbb{P}=\mathrm{Add}(\kappa^+,1)\).
			\item Suppose \(G\) is \(\mathbb{P}\)-generic over \(\mathrm{V}\vDash\mathsf{ZF}\).
			\item In \(\mathrm{V}\), let \(\alpha\in \mathrm{Ord}\), and suppose that for any \(\beta<\alpha\) such that \(\kappa <\Thorn_\beta\), if \(\beta\) is a successor, \(\Thorn_\beta\) is strongly regular.
			\item In \(\mathrm{V}\), suppose for any \(\beta\), \(\Thorn_\beta^\omega\) can be well-ordered to have size \(\le\kappa\) whenever \(\Thorn_\beta<\kappa\).
		\end{enumerate}
	\noindent Then, 
		\begin{enumerate}
			\item 
			\(\mathbb{P}=\mathrm{Add}(\kappa^+,1)\) adds a well-order of \(\mathcal{P}(\kappa)\) of length \(\kappa^+\) so that \((\kappa^+)^{<\kappa^+}=\kappa^+\) in \(\mathrm{V}[G]\).
			\item 
			\(\Thorn_\beta^{\mathrm{V}}=\Thorn_\beta^{\mathrm{V}[G]}\) for all \(\beta<\alpha\) whenever \(\Thorn_\beta<\kappa\).
			\item If \(\kappa^{<\kappa}\) can be well-ordered in \(\kappa\) in \(\mathrm{V}\) then \(\Thorn_{\beta}^{\mathrm{V}}=\Thorn_{\beta}^{\mathrm{V}[G]}\) for all \(\beta<\alpha\).
			\item 
			\(\mathbb{P}\) preserves strong regularity above \(\kappa^+\): if \(\lambda^{<\lambda}<\mathop{\mathrm{cof}}(\mu)\) for some \(\lambda <\mu\) in \(\mathrm{V}\), then this also holds in \(\mathrm{V}[G]\) for any \(\mu >\kappa^+\).
			\item 
			If \(\kappa\) is regular and \(\kappa^{<\kappa}=\kappa\) in \(\mathrm{V}\), then \(\mathbb{P}\) preserves strong regularity.
		\end{enumerate}
	\end{lemma}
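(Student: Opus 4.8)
The plan is to drive all five parts through a single engine: the distributivity fact \hlink{adddistrib} (that \(\mathbb{P}=\mathrm{Add}(\kappa^+,1)\) adds no function \(\kappa\to\mathrm{V}\), hence no new \(\le\kappa\)-length sequences, and in particular \(\mathcal{P}(\kappa)^{\mathrm{V}}=\mathcal{P}(\kappa)^{\mathrm{V}[G]}\)), combined with the name-decoding trick already exploited in \hlink{changthorn1}: given \(h\in\mathrm{V}[G]\) with name \(\dot h\), the function \(h'(x,p)=\gamma\) iff \(p\Vdash\dblq{\dot h(\check x)=\check\gamma}\) lives in \(\mathrm{V}\) and has \(\mathop{\mathrm{im}}(h')\supseteq\mathop{\mathrm{im}}(h)\), so surjectivity or unboundedness of \(h\) transfers to \(h'\) once one absorbs the extra coordinate \(p\in\mathbb{P}\), using that \(\mathcal{P}(\kappa)\twoheadrightarrow\mathbb{P}\) by the footnote coding.

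For (1) I would partition \(\kappa^+\) into \(\kappa^+\) blocks of order type \(\kappa\) and read \(g=\bigcup G\) off each block as a subset of \(\kappa\); the density sets analogous to \(D_{\ge\alpha}\) and \(E_X\) from \hlink{changthorn1}(2) show every \(X\in\mathcal{P}(\kappa)^{\mathrm{V}}=\mathcal{P}(\kappa)^{\mathrm{V}[G]}\) is enumerated, so \(\kappa^+\twoheadrightarrow\mathcal{P}(\kappa)\) in \(\mathrm{V}[G]\); since also \(\mathcal{P}(\kappa)\twoheadrightarrow\kappa^+\), thinning the surjection as in \hlink{changthorn1}(2) gives a bijection \(\kappa^+\to\mathcal{P}(\kappa)\), i.e.\ a well-order of length exactly \(\kappa^+\). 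Then \((\kappa^+)^{<\kappa^+}=\kappa^+\) follows by computing in \(\mathrm{V}[G]\) with this well-order: every sequence involved is a \(\le\kappa\)-sequence, hence lies in \(\mathrm{V}\) by distributivity, and injects into \(\mathcal{P}(\kappa)\).

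For (2) I would induct on \(\beta\): when \(\thorn_\beta<\kappa\), hypothesis (f) gives a well-order of \(\thorn_\beta^\omega\) of size \({<}\kappa\), so every surjection \(\thorn_\beta^\omega\to\delta\) is coded by a \({<}\kappa\)-length sequence of ordinals, none of which are new by distributivity; hence \(\Thorn{}(\thorn_\beta^\omega)\) is computed identically in \(\mathrm{V}\) and \(\mathrm{V}[G]\) and the successor step goes through (limits are trivial). Part (3) is the richest. Again inducting on \(\beta<\alpha\), the case \(\thorn_\beta<\kappa\) is (2). The boundary cases \(\thorn_\gamma<\kappa\) and \(\thorn_\gamma=\kappa\) of a successor \(\beta=\gamma+1\) force \(\thorn_{\gamma+1}\le\kappa^+\), already correctly computed in \(\mathrm{V}\)—using \(\kappa^{<\kappa}=\kappa\) to see \(\kappa^\omega\) is well-orderable of size \(\kappa\), so \(\Thorn{}(\kappa^\omega)=\kappa^+\)—and preserved because \(\kappa,\kappa^+\) are. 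The heart is \(\kappa<\thorn_\gamma\): from a supposed surjection \(f:\thorn_\gamma^\omega\to\thorn_{\gamma+1}\) in \(\mathrm{V}[G]\) (whose domain \(\thorn_\gamma^\omega\) is ground-model by the induction hypothesis plus distributivity) decode \(f':\thorn_\gamma^\omega\times\mathbb{P}\to\thorn_{\gamma+1}\) in \(\mathrm{V}\); regularity of \(\thorn_{\gamma+1}\) (from (e)) forces each slice \(f'(\cdot,p)\) to be bounded, so \(p\mapsto\sup\mathop{\mathrm{im}}(f'(\cdot,p))\) is an unbounded map \(\mathbb{P}\to\thorn_{\gamma+1}\). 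Since \(\mathcal{P}(\kappa)=2^\kappa\hookrightarrow\thorn_\gamma^{<\thorn_\gamma}\) (as \(\kappa<\thorn_\gamma\)) and \(\thorn_\gamma^{<\thorn_\gamma}\twoheadrightarrow\mathcal{P}(\kappa)\twoheadrightarrow\mathbb{P}\), this yields an unbounded map \(\thorn_\gamma^{<\thorn_\gamma}\to\thorn_{\gamma+1}\), contradicting the strong regularity of \(\thorn_{\gamma+1}\) supplied by (e).

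For (4) and (5) I would decode a hypothetical unbounded \(g:\lambda^{<\lambda}\to\mu\) to \(g':\lambda^{<\lambda}\times\mathbb{P}\to\mu\) in \(\mathrm{V}\). When \(\lambda\ge\kappa^+\) one absorbs \(\mathbb{P}\): \(\mathcal{P}(\kappa)=2^\kappa\hookrightarrow\lambda^{<\lambda}\) (since \(\kappa<\lambda\)) and \(\lambda^{<\lambda}\approx\lambda^{<\lambda}\times\lambda^{<\lambda}\), so \(\lambda^{<\lambda}\twoheadrightarrow\lambda^{<\lambda}\times\mathbb{P}\) and \(g'\) becomes a ground-model unbounded map \(\lambda^{<\lambda}\to\mu\), contradicting the hypothesis; this gives (4) for \(\mu>\kappa^+\). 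For (5), the assumptions \(\kappa\) regular and \(\kappa^{<\kappa}=\kappa\) cover the remaining regime \(\lambda\le\kappa\) (and \(\mu=\kappa^+\)): there \(\lambda^{<\lambda}\le\kappa^{<\kappa}=\kappa\) is well-orderable of size \({\le}\kappa\), so any \(g:\lambda^{<\lambda}\to\mu\) is essentially a \(\le\kappa\)-sequence of ordinals and hence lies in \(\mathrm{V}\) by distributivity, so no new unbounded map appears; together with (1)'s computation \((\kappa^+)^{<\kappa^+}=\kappa^+\), every defining instance of strong regularity is preserved. The main obstacle I expect is exactly the decoding bookkeeping when the domain \(\lambda^{<\lambda}\) acquires new elements in \(\mathrm{V}[G]\) (which starts once \(\lambda>\kappa^+\)): one must verify that the slice functions \(g'(\cdot,p)\) are genuine bounded ground-model functions or reduce to ground-model domains via distributivity, and correctly match the regime \(\lambda\le\kappa\) versus \(\lambda\ge\kappa^+\) against \(\mu\). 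The truly delicate threshold is \(\thorn_\gamma=\kappa\) (respectively \(\mu=\kappa^+\)): here \(\mathcal{P}(\kappa)\) is too large to absorb into \(\thorn_\gamma^{<\thorn_\gamma}=\kappa^{<\kappa}\), which is precisely why one must instead check the relevant thorn was already \(\kappa^+\) in \(\mathrm{V}\), pinning down where the \(\kappa^{<\kappa}=\kappa\) hypothesis is indispensable.
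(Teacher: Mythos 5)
Your proposal is correct and follows essentially the same route as the paper's proof: reading the generic off as a surjection \(\kappa^+\twoheadrightarrow 2^\kappa\) via density sets for (1), distributivity plus well-orderability of small \(\thorn_\beta^\omega\) for (2), the three-case split at \(\thorn_\gamma<\kappa\), \(=\kappa\), \(>\kappa\) with the name-decoding map \(f'\) absorbed into \(\thorn_\gamma^{<\thorn_\gamma}\) for (3), and the same decoding-plus-absorption versus distributivity dichotomy for (4) and (5). The only cosmetic difference is that in (3) you bound each slice \(f'(\cdot,p)\) separately and take suprema, where the paper contradicts strong regularity with the full product \(\thorn_\gamma^\omega\times 2^\kappa\subseteq\thorn_\gamma^{<\thorn_\gamma}\) in one step.
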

	\begin{proof}
		\begin{enumerate} \parskip 10pt
			\item Proceed similarly to \autoref{changthorn1} (2).  Regard \(\mathbb{P}\) as \((2^{\kappa})^{<\kappa^+}\) so conditions are functions from ordinals \(\alpha<\kappa^+\) to \(2^\kappa\).  Consider \(g\) as what \(G\) says about \(2^\kappa\):
			\[g=\{\langle \alpha,r\rangle \in \kappa^+\times 2^\kappa  : \exists p\in G\ (\langle \alpha,r\rangle \in p)\}\]
			This \(g\) is a surjection from an unbounded subset of \(\kappa^+\) to \(2^\kappa\), as witnessed by
			\begin{align*}
				D_{\ge\alpha}&=\{\langle \alpha,r\rangle :\exists \beta\ge\alpha\ (p(\beta)\in 2^\kappa)\}\\
				E_r&=\{p\in \mathbb{P}:r\in \mathop{\mathrm{im}}(p)\}
			\end{align*}
			which are dense for each \(\alpha<\kappa^+\) and \(r\in 2^\kappa\).  So now we recursively define two sequences \(\langle \beta_\alpha<\kappa^+:\alpha<\gamma \rangle\), for some \(\gamma \le\kappa^+\), and \(\langle r_\alpha\in 2^\kappa :\alpha<\gamma \rangle\):
			\begin{itemize}
				\item \(\beta_0=\min(\mathop{\mathrm{dom}}(g))\) and \(r_0=g(\beta_0)\).
				\item For \(\alpha>0\), \(r_\alpha=g(\beta_\alpha)\); and
				\item \(\beta_\alpha\) is the least element \(\beta\in \mathop{\mathrm{dom}}(g)\) such that \(\beta>\beta_\xi\) and for each \(\xi <\alpha\) and \(g(\beta)\not\in \{r_\xi :\xi <\alpha\}\).
			\end{itemize}
			It follows that \(\{r_\alpha:\alpha<\gamma \}=\mathop{\mathrm{im}}(g)=2^\kappa\) and so we have a bijection from \(\gamma \le\kappa^+\) to \(2^\kappa\).  Since clearly \(|2^\kappa |=|\gamma |\ge \kappa^+\), we must have \(\gamma =\kappa^+\), and so \(2^\kappa =\kappa^+\) in \(\mathrm{V}[G]\).  It follows that
			\[(\kappa^+)^{<\kappa^+}=(\kappa^+)^\kappa =(2^\kappa)^\kappa =2^{\kappa \cdot \kappa}=\kappa^+\text{.}\]
			\item Proceed by induction on \(\beta<\alpha\).  For \(\beta=0\), this is clear.  For successor \(\beta+1<\alpha\), suppose \(\Thorn_\beta^{\mathrm{V}}<\kappa\).  Since we can well-order \(\Thorn_\beta^\omega\) of size \(\le\kappa\), distributivity implies we don't add any functions \(f:\Thorn_\beta^\omega\rightarrow \mathrm{Ord}\) going from \(\mathrm{V}\) to \(\mathrm{V}[G]\) and hence the calculation of \(\Thorn_{\beta+1}\) is the same in both. For limit \(\beta\), if \(\Thorn_\xi^{\mathrm{V}}=\Thorn_\xi^{\mathrm{V}[G]}\) for all \(\xi <\beta\), then \(\Thorn_\beta\), as the supremum of these, is the same in \(\mathrm{V}\) and \(\mathrm{V}[G]\).
			
			\item We already have the base case of \(\beta=0\), the limit case, and the case that \(\Thorn_{\beta}<\kappa\).  So inductively, suppose \(\Thorn_\beta^{\mathrm{V}}=\kappa\) and \(\kappa^{<\kappa}=\kappa\) in \(\mathrm{V}\).  Thus \(\aleph^*(\kappa^\omega)=\aleph^*(\kappa^{<\kappa})=\aleph^*(\kappa)=\kappa^+\). By distributivity, \(\mathrm{V}[G]\) adds no functions from \(\kappa\) to \(\mathrm{Ord}\) and so \(\mathrm{V}\), \(\mathrm{V}[G]\) agree on \(\aleph^*(\kappa^\omega)\) and \(\kappa^+\): \(\Thorn_{\beta+1}=\kappa^+\).
			
			Now suppose \(\Thorn_\beta^{\mathrm{V}}>\kappa\).  Let \(f:\Thorn_\beta^\omega\rightarrow \Thorn_{\beta+1}^{\mathrm{V}}\) be in \(\mathrm{V}[G]\).  Fixing a name \(\dot f\) for \(f\), we may consider \(f':\Thorn_\beta^\omega\times \mathbb{P}\rightarrow \Thorn_{\beta+1}^{\mathrm{V}}\) defined by
			\[f'(x,p)=\begin{cases}
				\alpha&\text{if }p\Vdash\dblqm{\dot f(\check x)=\check\alpha}\text{,}\\
				0&\text{otherwise.}
			\end{cases}\tag{\(*\)}\]
			It follows that if \(f\) is surjective then \(f'\) is surjective in \(\mathrm{V}\) with domain equivalent to \(\Thorn_\beta^\omega\times 2^\kappa\), which we may regard as a subset of \(\Thorn_\beta^{\kappa}\subseteq \Thorn_\beta^{<\Thorn_\beta}\).  There can be no such surjection in \(\mathrm{V}\) by the strong regularity of \(\Thorn_{\beta+1}^{\mathrm{V}}\).  Hence \(\mathrm{V}[G]\) has no such surjection \(f:\Thorn_\beta^\omega\rightarrow \Thorn_{\beta+1}^{\mathrm{V}}\) and so \(\Thorn_{\beta+1}^{\mathrm{V}[G]}=\Thorn_{\beta+1}^{\mathrm{V}}\).
			
			\item If \(\lambda <\mu\) with \(\mu >\kappa^+\), then without loss of generality, \(\lambda \ge\kappa^+\).  By the same proof as with \autoref{changthorn1} (4), if \(\lambda <\mu\) has an unbounded \(f:\lambda^{<\lambda}\rightarrow \mu\) in \(\mathrm{V}[G]\), then we get an unbounded function \(f':\lambda^{<\lambda}\times 2^{\kappa}\rightarrow \mu\) defined using a name \(\dot f\) for \(f\) as with (\(*\)).  Since \(2^\kappa =\kappa^+\) in \(\mathrm{V}[G]\) by (1), and \(\lambda^{<\lambda}\ge \lambda  = \max(\lambda ,\kappa^+)\), by coding, this gives an unbounded \(g:\lambda^{<\lambda}\rightarrow \mu\), a contradiction.
				
			\item (4) tells us we preserve strong regularity for \(\mu >\kappa^+\).  We clearly preserve it for \(\mu \le\kappa\) by distributivity and the fact that we well-order \(\mu^{<\mu}\). So assume \(\mu =\kappa^+\) (which is calculated the same in \(\mathrm{V}\) and \(\mathrm{V}[G]\) by distributivity) is on the thorn sequence and is thus of the form \(\Thorn_{\beta+1}\) for some \(\beta\) and hence \(\Thorn_\beta\le\kappa\). By strong regularity, it follows that \(\Thorn_\beta^{<\Thorn_\beta}<\mathop{cof}(\Thorn_{\beta+1})\) in \(\mathrm{V}\).  In \(\mathrm{V}[G]\), we end up adding a well-order of \(\kappa^\kappa\) and hence of \(\Thorn_\beta^{\Thorn_\beta}\) of length \(\le\kappa^+\).  By distributivity, \(\kappa\) is still regular in \(\mathrm{V}[G]\) so that by regularity, \(\Thorn_\beta^{<\Thorn_\beta}\subseteq \kappa^{<\kappa}=\kappa <\mu\), as desired.\qedhere
		\end{enumerate}
	\end{proof}

	This allows us to continue to force with \(\mathrm{Add}(\Thorn_n^+,1)\) to calculate \(\Thorn_{n+1}\) and in fact \(\Thorn_{n+2}\) as \(\Thorn_{n}^{++}\).  And we can continue using this lemma with the only issue being hypothesis (b).  And this is where \(\Thorn_n\) being justified comes in, similar as with \autoref{changthorn1justified}.
		
	Let us deal with the case of \(\omega_2\) first.  In doing so, it will be useful to have the following which will be repeatedly used to extend \(\mathsf{DC}_{\kappa^+}\) for relations on \((\kappa^{++})^\omega\) to \(\mathsf{DC}_{\kappa^+}\) to relations on \(\mathcal{P}(\kappa^+)\).
	\begin{lemma}\label{surjchain}
		Let \(f:Y\rightarrow X\) be a surjection.  For \(R\subseteq X\times X\) let \(R'=\{\langle y,z\rangle :f(y)\mathrel{R}f(z)\}\).  Suppose \(\mathsf{DC}_{\tau}\) for relations on \(Y\) holds for \(\tau<\kappa\).  Then \(<\kappa\)-length \(R\)-chains have upper bounds iff \(<\kappa\)-length \(R'\)-chains have upper bounds.
	\end{lemma}
	\begin{lemma}\label{surjchain}
		Let \(R\subseteq X\times X\) be a relation, and define \(R'\) by \(y\mathrel{R'}z\) iff \(f(y) \mathrel{R} f(z)\), where \(f:Y\rightarrow X\) is a surjection.  Suppose that for \(\tau<\kappa\), \(\mathsf{DC}_{\tau}\) for relations on \(Y\) holds.  Then \(R\) is \(<\kappa\)-closed iff \(R'\) is.
	\end{lemma}
	\begin{proof}
		For one direction, suppose \(<\kappa\)-length \(R\)-chains can be extended.  Let \(\langle y_\alpha:\alpha<\tau\rangle\) for \(\tau<\kappa\) be an arbitrary \(R'\)-chain.  By definition of \(R'\), this means \(\langle f(y_\alpha):\alpha<\tau\rangle\) is a \(<\kappa\)-length \(R\)-chain.  So there is some upper-bound \(x\in X\).  By surjectivity, \(x=f(y)\) for some \(y\) so that \(f(y_\alpha)\mathrel{R} f(y)\) for all \(\alpha<\tau\).  So \(y\) is an upper-bound for \(\langle y_\alpha:\alpha<\tau\rangle\).
		
		For the other direction, we use \(\mathsf{DC}_{\tau}\) on \(Y\) for \(\tau<\kappa\).  For any \(<\kappa\)-length \(R\)-chain \(\langle x_\alpha:\alpha<\tau\rangle\), by \(\mathsf{DC}_{\tau}\), we can find a \(\tau\)-length \(R'\)-chain \(\langle y_\alpha:\alpha<\tau\rangle\) where \(f(y_\alpha)=x_\alpha\).  If this has an upper-bound \(y\) then \(f(y)\) is an upper-bound for \(\langle x_\alpha:\alpha<\tau\rangle\).\qedhere
	\end{proof}
	
	An easy corollary is the following.
	
	\begin{corollary}\label{surjDC}
		Assume \(\mathsf{DC}_{\kappa}\) for relations on \(Y\).  Suppose \(Y\twoheadrightarrow X\).  Then \(\mathsf{DC}_{\kappa}\) holds for relations on \(X\).
	\end{corollary}
	
	Now we may proceed generally as follows and assume we are forcing with \(\mathrm{Add}(\kappa^+,1)\).  Here the jump from \(\Thorn_0=\omega\) to \(\Thorn_1=\omega_2\) poses a slight problem where we cannot assume \(\kappa\) is on the thorn sequence.  Nevertheless, this doesn't pose a big problem, as \(\kappa=\omega_1\) has \(\kappa^{<\kappa}\approx\omega^\omega\), and we can reason with \(\kappa\) using the combinatorial results around \(\omega^\omega\).  For \(\kappa>\omega_1\), we without loss of generality should think of \(\kappa\) as on the thorn sequence.
	\begin{lemma}\label{changDClatersteps}
		\begin{enumerate}[label=\alph*.]
			\item Let \(\kappa>\aleph_0\) be a cardinal in \(\mathrm{L}(\mathrm{Ord}^\omega)\vDash\mathsf{ZF}+\mathsf{DC}\).
			\item Let \(\kappa^\omega\twoheadrightarrow\mathbb{P}_0\) in \(\mathrm{L}(\mathrm{Ord}^\omega)\).
			\item Let \(G_0*G_1\) be \(\mathbb{P}_0*\mathbb{P}_1=\mathbb{P}_0*\mathrm{Add}(\kappa^+,1)\)-generic over \(\mathrm{L}(\mathrm{Ord}^\omega)\).
			\item In \(\mathrm{V}=\mathrm{L}(\mathrm{Ord}^\omega)[G_0]\),
			\begin{enumerate}[label=\roman*.]
				\item Assume \(\mathsf{DC}_\kappa\) holds for relations on \(\mathcal{P}(\kappa)\).
				\item Suppose \(\kappa\) is regular and \(\kappa^{<\kappa}=\kappa\).
				\item Suppose \(\Thorn_{\alpha-1}\le \kappa<\Thorn_{\alpha}\) for some \(\alpha\ge 1\).
				\item Suppose \(\Thorn_{\alpha}=\Thorn_{\alpha}^{\mathrm{L}(\mathrm{Ord}^\omega)}\) and \(\Thorn_{\alpha+1}=\Thorn_{\alpha+1}^{\mathrm{L}(\mathrm{Ord}^\omega)}\) are strongly regular and justified \(\alpha\ge 1\).
				\item Suppose \(|\Thorn^{<\Thorn}|=\Thorn\) for \(\Thorn<\kappa\) on the thorn sequence.
			\end{enumerate}
		\end{enumerate}
		Then, in \(\mathrm{V}[G_1]=\mathrm{L}(\mathrm{Ord}^\omega)[G_0*G_1]\), \(\mathsf{DC}_{\kappa^+}\) holds for relations on \(\Thorn_{\alpha+1}^\omega\) and in fact on \(\mathcal{P}(\kappa^+)\).
	\end{lemma}
	\begin{proof}
		Because \(\Thorn_{\alpha-1}\le\kappa<\Thorn_{\alpha}\), we have \(\Thorn_{\alpha-1}^\omega\twoheadrightarrow\kappa\) and so \(\Thorn_{\alpha-1}^\omega\approx(\Thorn_{\alpha-1}^\omega)^\omega\twoheadrightarrow\kappa^\omega\).
		In particular, \(\aleph^*(\Thorn_{\alpha-1}^\omega)=\aleph^*(\kappa^\omega)=\Thorn_{\alpha}\) in \(\mathrm{V}\).
		
		Now consider \autoref{changthornlatersteps} with the \(\alpha\) there being \(\alpha+2\) from here.  The hypotheses (e) and (f) there hold by (ii), (iv), and (v) here.  The other hypotheses are immediate.  So the conclusion of \autoref{changthornlatersteps} tells us
		\begin{itemize}
			\item By (1), \(2^\kappa=\kappa^+\) so that \(\aleph^*(\kappa^\omega)=\aleph^*(\kappa)=\kappa^+\) in \(\mathrm{V}[G_1]\).
			\item By (3), \(\Thorn_\alpha^{\mathrm{V}}=\Thorn_\alpha^{\mathrm{V}[G_1]}=\kappa^+\) and \(\Thorn_{\alpha+1}^{\mathrm{V}}=\Thorn_{\alpha+1}^{\mathrm{V}[G_1]}\).
			\item By (5), \(\Thorn_{\alpha}\) and \(\Thorn_{\alpha+1}\) are still strongly regular.
		\end{itemize}
		This also tells us \((\kappa^{+})^{<\kappa^+}=\kappa^+\) so that \(\Thorn_{\alpha+1}=\kappa^{++}\) in \(\mathrm{V}[G_1]\).
		
		Let \(R\) be a relation on \((\kappa^{++})^\omega\) such that \(\le\kappa\)-length chains can be extended.  For each \(\xi <\kappa^{++}\), let \(\gamma_\xi\) be the least such that every chain in \(\xi^\omega\) has an upper bound in \(\gamma_\xi^\omega\).
		\begin{claim}\label{changDClaterstepscl1}
			\(\gamma_\xi <\kappa^{++}\) whenever \(\xi <\kappa^{++}\).
		\end{claim}
		\begin{proof}
			Let \(\xi <\kappa^{++}\) be arbitrary.  There is surjection \(f:\Thorn_\alpha^\omega\rightarrow \xi^\omega\) in \(\mathrm{V}[G_1]\). Take \(g:\xi^\omega\rightarrow \kappa^{++}\) to be defined such that \(x\) has an upper bound in \(g(x)^\omega\) for \(g(x)<\kappa^{++}\) least.  It follows that \(g\) must be bounded in \(\kappa^{++}\) since otherwise \(g\circ f:\Thorn_\alpha^\omega\rightarrow \Thorn_{\alpha+1}\) would violate the strong regularity of \(\Thorn_{\alpha+1}\) in \(\mathrm{V}[G_1]\).\qedhere
		\end{proof}
		As a result, we can find a \(\hat\beta<\kappa^{++}\) of cofinality \(\kappa^+\) such that if \(\xi <\hat\beta\) then \(\gamma_\xi <\hat\beta\) just as with (6) of \autoref{changthorn1}: take the supremum of the sequence defined by recursion \(\beta_{0}=\gamma_0\), \(\beta_{\xi +1}=\gamma_{\beta_\xi}\), and with supremums at limit stages \(\le\kappa^+\).  In other words, define \(\hat\beta=\sup_{\xi <\kappa^{+}}\beta_\xi\).
		
		In \(\mathrm{V}[G_1]\), \(\kappa^+\twoheadrightarrow(\kappa^+)^\omega\twoheadrightarrow\hat\beta^\omega\).  So \(\hat\beta^\omega\) can be well-ordered which implies we can construct a \(\kappa^+\)-length branch of \(R\): at each successor stage, we just pick the least \(\vec{\alpha}\in \hat\beta^\omega\) that extends our current branch, and at limit stages, we just take unions.  Thus \(\mathsf{DC}_{\kappa^+}\) holds for relations on \((\kappa^{++})^\omega\) in \(\mathrm{V}[G_1]\).
		
		\(\Thorn_{\alpha+1}\) and \(\Thorn_{\alpha}\) being justified tells us that we have \(\mathsf{DC}_{\kappa^+}\) on \(\mathcal{P}(\kappa^+)\) in \(\mathrm{V}[G_1]\), which we now show.
		\begin{claim}\label{changDClaterstepscl2}
			In \(\mathrm{V}[G_1]\), \((\kappa^{++})^\omega\twoheadrightarrow \mathcal{P}(\kappa^+)^{\mathrm{V}[G_1]}\).
		\end{claim}
		\begin{proof}
			In forcing with any \(\mathbb{P}\), subsets of \(\tau\in \mathrm{Ord}\) are given by \(\mathbb{P}\)-names for subsets of \(\tau\), which can be regarded as elements of \(\mathcal{P}(\tau\times \mathbb{P})\).  So it suffices to show in \(\mathrm{L}(\mathrm{Ord}^\omega)\) that
			\[\Thorn_{\alpha+1}^\omega\twoheadrightarrow \mathcal{P}(\Thorn_{\alpha}\times \mathbb{P}_0\times \mathbb{P}_1)\text{.}\]
			We can regard \(\mathbb{P}_1=\mathcal{P}(\kappa)^{\mathrm{V}}\).  Elements of \(\mathcal{P}(\kappa)^{\mathrm{V}}\) can be regarded as elements of \(\mathcal{P}(\kappa\times \mathbb{P}_0)^{\mathrm{L}(\mathrm{Ord}^\omega)}=\mathcal{P}(\kappa\times \mathbb{P}_0)^{\mathrm{L}(\mathrm{Ord}^\omega)}\).  Now as \(\Thorn_\alpha\) is justified, \(\mathcal{P}(\Thorn_{\alpha-1}^\omega)\subseteq \mathrm{L}_{\Thorn_{\alpha}}(\Thorn_{\alpha}^\omega,X)\) for some \(X\subseteq \Thorn_{\alpha}\).  So in \(\mathrm{L}(\mathrm{Ord}^\omega)\),
			\[\Thorn_{\alpha}^\omega\approx \Thorn_{\alpha}^\omega\times \Thorn_{\alpha}\times X\twoheadrightarrow \mathcal{P}(\Thorn_{\alpha-1}^\omega)\twoheadrightarrow \mathcal{P}(\kappa^\omega)\approx\mathcal{P}(\kappa\times\kappa^\omega)\twoheadrightarrow \mathcal{P}(\kappa\times\mathbb{P}_0)\approx\mathbb{P}_1\text{,}\]
			Where this last surjection is due to (b).  Also, clearly \(\Thorn_\alpha^\omega\twoheadrightarrow\kappa^\omega\twoheadrightarrow\mathbb{P}_0\).
			
			Now since \(\Thorn_{\alpha+1}\) is justified, \(\mathcal{P}(\Thorn_{\alpha}^\omega)\subseteq \mathrm{L}_{\Thorn_{\alpha+1}}(\Thorn_{\alpha+1}^\omega,X)\) for some \(X\subseteq \Thorn_{\alpha+1}\).  So in \(\mathrm{L}(\mathrm{Ord}^\omega)\),
			\begin{align*}
				\Thorn_{\alpha+1}^\omega&\twoheadrightarrow\mathcal{P}(\Thorn_{\alpha}^\omega)\approx\mathcal{P}(\Thorn_{\alpha}^\omega\times \Thorn_{\alpha}^\omega\times \Thorn_{\alpha}^\omega)\\
				&\twoheadrightarrow\mathcal{P}(\Thorn_{\alpha}\times\mathbb{P}_0\times\mathbb{P}_1)\text{.}
			\end{align*}
			Thus in \(\mathrm{V}[G_1]\), since \(\Thorn_{\alpha}\) and \(\Thorn_{\alpha+1}\) are the same in \(\mathrm{L}(\mathrm{Ord}^\omega)\), \(\mathrm{V}\), and \(\mathrm{V}[G_1]\), we get \(\Thorn_{\alpha+1}^\omega\twoheadrightarrow \mathcal{P}(\Thorn_{\alpha})\).\qedhere
		\end{proof}
		
		So \autoref{surjDC} implies \(\mathsf{DC}_{\kappa^+}\) on \(\mathcal{P}(\kappa^+)\) in \(\mathrm{V}[G_1]\).\qedhere
	\end{proof}
	
	This allows us to continually force more and more \(\mathsf{DC}\) over \(\mathrm{L}(\mathrm{Ord}^\omega)\), but only up to a point.  We can get \(\mathsf{DC}_{\aleph_n}\) over \(\mathcal{P}(\aleph_n)\) for arbitrarily large \(n\), but continuing to the infinite case has problems.  One such problem is what happens when we consider an infinite iteration \(\bigast_{n<\omega}\mathrm{Add}(\aleph_n^+,1)\).  If the support is finite, we might end up adding real numbers and change \(\Thorn_1\).  But regardless of support, it's not clear that the tail posets will preserve the properties we want them to, as they will be too large to use dependent choice: at any stage we only have \(\mathsf{DC}_\kappa\) over \(\mathcal{P}(\kappa)\), but the tail poset would have size \(\ge\aleph_\omega>\kappa\) for any \(\kappa<\aleph_\omega\).
	
	Nevertheless, we can continue this iteration for arbitrarily many finite steps, giving \autoref{mainthmfinitethorn}.
	\begin{theorem}\label{thmfinitethorn}
		Work in \(\mathrm{V}=\mathrm{L}(\mathrm{Ord}^\omega)\vDash\mathsf{ZF}+\mathsf{DC}\), and let \(n<\omega\). Suppose \(\Thorn_i\) is justified and strongly regular for each \(i\in \omega\). Let \(\mathbb{P}_n=\bigast_{i<n}\dot{\mathbb{Q}}_i\) be the iteration where \(\mathbb{Q}_i=\mathrm{Add}(\aleph_i^+,1)\), and let \(G_n\) be \(\mathbb{P}_n\)-generic over \(\mathrm{V}\). Then in \(\mathrm{V}[G_n]\),
		\begin{enumerate}[label=\roman*.]
			\item \(\mathsf{DC}_{\kappa}\) holds for relations on \(\mathcal{P}(\kappa)\) for all \(\kappa\le\aleph_n\).
			\item \(\Thorn_i^{\mathrm{V}}=\Thorn_i^{\mathrm{V}[G_n]}\) for all \(i\in \omega\).
			\item \(\Thorn_0=\omega\), and \(\Thorn_{i}=\aleph_i^+\) for all \(0<i\le n\).
			\item \(\kappa^+=2^\kappa\) for \(\kappa<\aleph_n\).
			\item Each \(\Thorn_i\) is still strongly regular for \(n\in \omega\).
		\end{enumerate}
	\end{theorem}
	\begin{proof}
		Proceed by induction on \(n\) to show (i)--(v) hold in \(\mathrm{V}[G_n]\) in addition to
		\begin{enumerate}
			\item[vi.] \(\aleph_i^{<\aleph_i}=\aleph_i\) for each \(i\le n\).
		\end{enumerate}
		Note that with \(\mathbb{P}_n\), for \(n>0\), the last poset forced with is \(\mathrm{Add}(\aleph_n,1)\).  For \(n=0\), \(\mathbb{P}_0\) is trivial, and all of the above hold obviously.  For \(n=1\), these all hold by \autoref{changthorn1} with (vi) holding due to \(\mathsf{CH}\).  Assume (i)--(vi) hold inductively for \(n\ge 1\).  For \(n+1\), \(\mathrm{V}[G_{n+1}]=\mathrm{V}[G_n*g_{n+1}]\) where \(g_{n+1}\) is \(\mathrm{Add}(\aleph_{n+1},1)\)-generic over \(\mathrm{V}[G_n]\).  We first use \autoref{changthornlatersteps} with ground model \(\mathrm{V}[G_n]\) and forcing with \(\mathrm{Add}(\kappa^+,1)\) with \(\kappa=\aleph_n\), which means confirming the hypotheses of the lemma.
		
		The hypotheses (a)--(d) of \autoref{changthornlatersteps} hold easily.  The hypothesis (e) holds with \(\alpha=\omega\): in \(\mathrm{V}[G_n]\), for any \(i<\omega\) such that \(\aleph_n<\Thorn_i\), \(\Thorn_i\) is strongly regular.  We inductively get (f), which states that in \(\mathrm{V}[G_n]\), for any \(i\), \(\Thorn_i^\omega\) can be well-ordered to have size \(\le \aleph_n\) whenever \(\Thorn_i<\aleph_n\).  To see this, by (iii) in \(\mathrm{V}[G_n]\), if \(\Thorn_i<\aleph_n\) then either \(\Thorn_i=\aleph_0\) so that \(\Thorn_i^\omega\) has size \(\aleph_1\le\aleph_{n}\), or else \(\Thorn_i=\aleph_{i+1}<\aleph_n\) has by (vi) of \(\mathrm{V}[G_n]\) that \(\aleph_j^{<\aleph_j}=\aleph_j<\aleph_n\) for \(j=i+1<n\).
		
		The conclusion of \autoref{changthornlatersteps} for \(\kappa=\aleph_{n}\) and \(\alpha=\omega\) is that in \(\mathrm{V}[G_{n+1}]\),
		\begin{itemize}
			\item By (1), we get (iv): \(2^{\aleph_{n}}=\aleph_{n+1}\).
			\item This also implies (vi) since inductively \(\aleph_i^{<\aleph_i}=\aleph_i\) for \(i\le n\) (distributivity of \(\mathrm{Add}(\aleph_{n+1},1)\) doesn't change this) and \(\aleph_{n+1}^{<\aleph_{n+1}}=(2^{\aleph_n})^{\aleph_n}=\aleph_{n+1}\).
			\item By (3), we get (ii): \(\Thorn_i^{\mathrm{V}[G_n]}=\Thorn_i^{\mathrm{V}[G_{n+1}]}\) which is inductively \(\Thorn_i^{\mathrm{V}}\) for \(i<\omega\).
			\item The above two points give (iii): \(\Thorn_{n+1}=|\aleph_{n+1}^{\omega}|^+=|\aleph_{n+1}^{<\aleph_{n+1}}|^+=\aleph_{n+1}^+\), and we still have \(\Thorn_i=\aleph_{i+1}\) for \(i\le n\) by distributivity.
			\item By (5) we get (v): in \(\mathrm{V}[G_n]\), inductively \(\aleph_n=\Thorn_{n-1}\) by (iii), and \(\Thorn_{n-1}\) is strongly regular by (v) and by (vi).  We know \(\aleph_{n}^{<\aleph_n}=\aleph_n\), so the hypothesis of (5) holds and so \(\mathrm{Add}(\aleph_{n+1},1)\) preserves strong regularity.
		\end{itemize}
		So we have (ii)--(vi) for \(n+1\), and all that remains in the induction is showing (i): that \(\mathsf{DC}_{\aleph_{n+1}}\) holds for relations on \(\mathcal{P}(\aleph_{n+1})\).  For \(n+1=2\), consider \autoref{changDClatersteps} with \(\alpha=n=1\), \(\kappa=\omega_1\), and \(\mathbb{P}_0\) of the lemma as \(\mathrm{Add}(\omega_1,1)=\mathbb{P}_1\) here.  The hypothesis (b) of \autoref{changDClatersteps} is clear then: \(\kappa^\omega=\omega_1^\omega\approx\mathbb{P}_1\) in \(\mathrm{L}(\mathrm{Ord}^\omega)\).  The other hypotheses are clear by the inductive hypothesis on \(\mathrm{V}[G_1]\) so \(\mathsf{DC}_{\omega_2}\) holds in \(\mathrm{V}[G_2]\) for relations on \(\mathcal{P}(\omega_2)\).
		
		For \(n+1>2\), consider \autoref{changDClatersteps} with
		\begin{itemize}
			\item \(\alpha=n\ge 1\) and \(\kappa=\Thorn_{n}>\aleph_0\), and
			\item the poset \(\mathbb{P}_0\) of the lemma as \(\mathbb{P}_n\).
		\end{itemize}
		The hypotheses of \autoref{changDClatersteps} hold by the inductive hypothesis on \(\mathrm{V}[G_n]\) with the only work being to show (b), that \(\Thorn_n^\omega\twoheadrightarrow \mathbb{P}_n\).  This follows because we can embed \(\mathbb{P}_n\) into \(\mathcal{P}(\Thorn_{n-1})\) of \(\mathrm{L}(\mathrm{Ord}^\omega)\).  Since \(\Thorn_n\) is justified, we have \(\Thorn_n^\omega\twoheadrightarrow \mathcal{P}(\Thorn_{n-1})\twoheadrightarrow \mathbb{P}_n\).  So the conclusion that \(\mathsf{DC}_{\Thorn_n^+}\) holds for relations on \(\mathcal{P}(\Thorn_n^+)=\mathcal{P}(\aleph_{n+2})\) gives (i).\qedhere
	\end{proof}

	As stated before, there are problems with continuing this iteration to infinitely many steps, but even if we could alleviate these problems and go to \(\mathrm{V}[G]\) where \(\Thorn_i=\aleph_{i+1}\) for all \(i<\omega\) with \(\mathsf{DC}_\kappa\) for relations on \(\mathcal{P}(\kappa)\) for all \(\kappa<\aleph_\omega\), there are still other obstacles in continuing.  In \(\mathrm{V}[G]\), we also of course have \(\Thorn_\omega=\aleph_\omega\), but it's not clear how we can go beyond this because we don't have \(\mathsf{DC}_{\omega_\omega}\) over relations on \(\mathcal{P}(\omega_\omega)\), and the calculation of \(\aleph_\omega^\omega\) is also unknown---one would expect due to König's lemma that if this is well-ordered, \(\aleph_\omega^{\aleph_0}>\aleph_{\omega}\).\footnote{One might expect that \(\Thorn_{\omega+1}\) then becomes \(\Thorn_{\omega}^{++}\) similar to \(\Thorn_{1}=\Thorn_0^{++}\) as above, but this is just conjecture.} With the lack of \(\mathsf{DC}_{\omega_\omega}\), the distributivity of \(\mathrm{Add}(\omega_{\omega+1},1)\) is called into question, and overall, the arguments start to fall apart.
	
	So it remains an open question how to deal with \(\aleph_\omega\) more generally in the Chang model, a problem also encountered 
	in \cite{LarsonSargsyan}.  Another open question is just how strong is the statement that \(\Thorn_n\) is strongly regular and justified for \(n<\omega\)?

	\begin{bibdiv}
		\begin{biblist}
			\bib{TakehikoSargsyan}{article}{
				author={Gappo, Takehiko},
				author={Sargsyan, Grigor},
				title={Determinacy in the Chang Model}
				date={2023},
				note={Unpublished}
			}
			\bib{Kanamori}{book}{
				author={Kanamori, Akihiro},
				title={The Higher Infinite},
				date={2009},
				edition={2},
				series={Springer Monographs in Mathematics},
				publisher={Springer-Verlag Berlin Heidelberg}
			}
			\bib{Kunenmeas}{article}{
				author={Kunen, Kenneth},
				title={A model for the negation of the axiom of choice},
				book={
					title={Cambridge Summer School in Mathematical Logic},
					editor={Mathias, A. R. D. and Rogers, H.},
					year={1973},
					publisher={Springer Berlin Heidelberg},
					address={Berlin, Heidelberg},
				},				
				pages={489--494}
			}
			\bib{LarsonSargsyan}{article}{
				author={Larson, Paul B.},
				author={Sargsyan, Grigor},
				title={Failures of square in \(\mathbb{P}_{\mathrm{max}}\) extensions of Chang models},
				date={2021},
				eprint={https://arxiv.org/abs/2105.00322},
				note={Unpublished}
			}
			\bib{LarsonTower}{book}{
				title={The Stationary Tower: Notes on a Course by W. Hugh Woodin},
				author={Larson, Paul B.},
				series={University Lecture Series},
				volume={32},
				date={2004},
				publisher={American Mathematical Society}
			}
		\end{biblist}
	\end{bibdiv}

\end{document}